\documentclass[a4paper,12pt]{amsart}

\usepackage{cmap}
\usepackage[T1]{fontenc}
\usepackage[utf8]{inputenc}
\usepackage[english]{babel}
\usepackage{lmodern}
\usepackage{amsmath,amssymb,amsthm}
\usepackage{enumerate}
\usepackage{xurl}
\usepackage[hidelinks]{hyperref}
\newtheorem{theorem}{Theorem}[section]
\newtheorem{lemma}[theorem]{Lemma}
\newtheorem{proposition}[theorem]{Proposition}
\newtheorem{cor}[theorem]{Corollary}
\theoremstyle{definition}
\newtheorem{definition}[theorem]{Definition}
\newtheorem{example}[theorem]{Example}
\theoremstyle{remark}
\newtheorem{remark}[theorem]{Remark}

\numberwithin{equation}{section}

\begin{document}
\title[On some $D(u^2)$-triples and quadruples of triangular numbers]
{On some $D(u^2)$-triples and quadruples of triangular numbers}
\author[Pavao Radi\'{c}]{Pavao Radi\'{c}}
\date{September 18, 2026}

\begin{abstract}
We classify all $D(u^2)$-triples of triangular numbers whose smallest
element is $T_{4u-1}$ or $T_{4u}$. We study the regularity of
$D(N)$-triples of triangular numbers. We also show that infinitely many
positive integers $u$ admit an all-triangular $D(u^2)$-quadruple.
For every positive integer $k$, triangular number
$T_{16k^2-1}$ belongs to infinitely many $D(u^2)$-quadruples of
triangular numbers with distinct positive integer parameters $u$.
\end{abstract}
\maketitle

\noindent{\it 2020 {Mathematics Subject Classification:}}
11D09, 11B37, 11J68, 11J86

\noindent{\it Keywords}: Diophantine $m$-tuples, Pellian equations,
triangular numbers.

\section{Introduction}\label{intr}
\begin{definition}
Let $N\neq0$ be an integer. We call a set of $m$ distinct positive
integers a $D(N)$-$m$-tuple, or an Diophantine $m$-tuple with the property $D(N)$,
if the product of any two distinct elements, increased by $N$, is a
perfect square.
\end{definition}

The case $N=1$ is the classical one. In this case, such sets are usually
called Diophantine $m$-tuples. The first Diophantine quadruple in
integers, $ \{1,3,8,120\}$, was found by Fermat. Since then, one of the central questions in the theory of Diophantine
$m$-tuples has been how large such sets can be. It is now known that
there are no $D(1)$-quintuples and no $D(4)$-quintuples
\cite{HeTogbeZiegler2019,BliznacFilipin2019}.

We recall the notion of regularity. Let $N$ be a positive integer,
and let $\{a,b,c\}$ be a $D(N)$-triple. The triple is called regular if
\begin{equation}
c=a+b\pm2\sqrt{ab+N}.
\end{equation}
For square $N$ and any $D(N)$-triple $\{a,b,c\}$ we can define, as seen in \cite[p.~171, equation~(2.7)]{AHS1980}, two integers
\begin{equation}\label{eq:regularity2}
d_\pm=a+b+c+\frac{2}{N}\left(abc\pm\sqrt{(ab+N)(ac+N)(bc+N)}\right).
\end{equation}
Then $\{a,b,c,d\}$ is a $D(N)$-quadruple, called regular.
Equivalently, as seen in \cite{Gibbs2010}, it satisfies
\begin{equation}\label{eq:regular-quadruple}
N(a+b-c-d)^2=4(ab+N)(cd+N).
\end{equation}
Since $d_+>0$ and
$d_+d_-=(c-a-b)^2-4(ab+N)$, we have $d_-=0$ if and only if
the triple $\{a,b,c\}$ is regular. If $a<b<c$, then $d_-<c<d_+$.
Consequently, a $D(N)$-quadruple with $a<b<c<d$ is regular if and
only if $d=d_+$. It remains an open question whether every extension
of a $D(1)$-triple or a $D(4)$-triple by a larger element is regular.

For positive integers $u$, we investigate $D(u^2)$-$m$-tuples whose
elements are triangular numbers
\[
        T_n=\frac{n(n+1)}{2},\qquad n\in\mathbb{N}.
\]
The connection between Diophantine $m$-tuples and triangular numbers
was first explored in \cite{Deshpande}. Recently, Bliznac Trebje\v{s}anin
\cite{BliznacTriangular} constructed infinitely many $D(u^2)$-triples
of triangular numbers containing a prescribed triangular number.
Related questions concerning the more general property $D(a)$ were
first studied by the same author in \cite{BliznacTriangular2}, and
subsequently in \cite{BagciZhouZheng}, using Pellian equations.
We adopt the same approach here.

Let $n,a,N$ be positive integers. If $\{T_n,T_a\}$ is a $D(N)$-pair,
then there exists $r\in\mathbb N$ such that
\[
        T_nT_a+N=r^2.
\]
Substituting $T_n=n(n+1)/2$ and $T_a=a(a+1)/2$, and setting
\[
        x=4r,\qquad y=2a+1,
\]
we obtain the generalized Pellian equation
\[
        x^2-n(n+1)y^2=16N-n(n+1).
\]
Thus, the problem of determining triangular numbers $T_a$ such that
$\{T_n,T_a\}$ is a $D(N)$-pair reduces to the study of solutions of
this equation. As discussed in
\cite{BliznacTriangular,BliznacTriangular2}, depending on $n$ and $N$,
the equation may have no integer solutions, or it may have several
nonassociated fundamental solutions, yielding several distinct
sequences of solutions. If $N=u^2$, the equation has a solution for
every positive integer $n$ \cite[Theorem~1]{BliznacTriangular}.

In Section~\ref{sec:triples}, we focus on the two families of
$D(u^2)$-pairs with fixed first element, $\{T_{4u-1},T_a\}$ and $\{T_{4u},T_a\}$.
We describe all such pairs and show that, in each case, positive integers $a$ come from the single sequence of positive integers. Then we give the complete classification of all $D(u^2)$-triples
of triangular numbers whose smallest element is
$T_{4u-1}$ or $T_{4u}$. More precisely, we prove that all $D(u^2)$-triples $\{T_n,T_a,T_b\}$ where $n \in \{4u-1,4u\}$ are the ones where $a$ and $b$ are consecutive terms of the same sequence mentioned above to describe all pairs.

In Section~\ref{sec:regularity}, we prove that, for every fixed
positive integer $N$, there are only finitely many regular
$D(N)$-triples of triangular numbers. The resulting bounds also
imply that all triples classified in Section~\ref{sec:triples} are
irregular. We further identify values of $N$ for which no regular
$D(N)$-triples of triangular numbers exist.

Another reason for considering square parameters $N$ in
Sections~\ref{sec:triples} and~\ref{sec:quadruples} is that $8u^2$, which is not a triangular number,
forms a $D(u^2)$-pair with every positive triangular number $T_n$,
since
\begin{equation}\label{glavnajednakost}
    8u^2T_n+u^2=u^2(2n+1)^2.
\end{equation}
Consequently, any $D(u^2)$-quadruple consisting entirely of
triangular numbers can be extended to a $D(u^2)$-quintuple by
adjoining $8u^2$. This generalizes the case $u=1$ considered in
\cite{BagciZhouZheng}. The known nonexistence of $D(1)$-quintuples
therefore implies that there are no $D(1)$-quadruples all of whose
elements are triangular numbers
\cite[Proposition~3]{BagciZhouZheng}. The same argument applies to
$u=2$, since there are no $D(4)$-quintuples
\cite[Theorem~1]{BliznacFilipin2019}, which proves the following Proposition:
\begin{proposition}
There are no $D(4)$-quadruples all of whose elements are triangular
numbers.
\end{proposition}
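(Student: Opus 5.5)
The plan is to argue by contradiction, using the identity \eqref{glavnajednakost} with $u=2$ together with the nonexistence of $D(4)$-quintuples \cite[Theorem~1]{BliznacFilipin2019}.

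Suppose $\{T_{n_1},T_{n_2},T_{n_3},T_{n_4}\}$ is a $D(4)$-quadruple of triangular numbers. Its elements are distinct positive integers, so the indices $n_i$ are distinct positive integers. We adjoin $8u^2=32$. By \eqref{glavnajednakost} with $u=2$ we have
\[
32T_n+4=4(2n+1)^2=(4n+2)^2
\]
for every $n\in\mathbb N$. Hence the product of $32$ with each $T_{n_i}$, increased by $4$, is a perfect square. The pairs among the $T_{n_i}$ already satisfy the $D(4)$ property by assumption. So all ten pairwise conditions hold for $\{T_{n_1},T_{n_2},T_{n_3},T_{n_4},32\}$.

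The only point that needs care is that the quintuple consists of five \emph{distinct} positive integers, since the definition of a $D(N)$-$m$-tuple requires this. For this it suffices that $32$ is not triangular. Since $T_7=28$ and $T_8=36$, and the sequence $T_n$ is strictly increasing, $32$ lies strictly between consecutive triangular numbers. Therefore $32\neq T_{n_i}$ for every $i$, and the five numbers are distinct.

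Thus $\{T_{n_1},T_{n_2},T_{n_3},T_{n_4},32\}$ is a $D(4)$-quintuple. This contradicts \cite[Theorem~1]{BliznacFilipin2019}, which states that no $D(4)$-quintuples exist. Hence no $D(4)$-quadruple consists entirely of triangular numbers.

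There is no real obstacle here. The argument is the same as the one used for $u=1$ in \cite[Proposition~3]{BagciZhouZheng}, and the only detail to check is the distinctness verified above.
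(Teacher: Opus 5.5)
Your proof is correct and is the paper's argument: adjoin $8u^2=32$, use \eqref{glavnajednakost} to get $32T_n+4=(4n+2)^2$, and contradict the nonexistence of $D(4)$-quintuples. Your explicit check that $32$ is not triangular ($T_7=28<32<36=T_8$) supplies the distinctness detail that the paper only asserts in passing.
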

However, such nonexistence does not hold for all positive integers
$u$. In Section~\ref{sec:quadruples}, we show that infinitely many
positive integers $u$ admit an all-triangular $D(u^2)$-quadruple.
More precisely, for every positive integer $k$, the fixed triangular
number $T_{16k^2-1}$ belongs to infinitely many such quadruples with
distinct positive integer parameters $u$. This is expressed in the
following main theorem, proved in Section~\ref{sec:quadruples}.
\begin{theorem}\label{thm:triangular-quadruples}
Let $k$ be a positive integer, and define the sequence
$(u_n)_{n\ge0}$ by
\[
u_0=k,\qquad u_1=k(32k^2-4k-1),
\]
and
\[
u_{n+2}=(64k^2-2)u_{n+1}-u_n
\qquad(n\ge0).
\]
Then, for every $n\ge2$, the set
\[
\left\{
T_{16k^2-1},\;
T_{\frac{u_n-4k^2-k}{2k}},\;
T_{\frac{u_n+4k^2-k}{2k}},\;
T_{\frac{u_n^2}{k^2}-1}
\right\}
\]
is a $D(u_n^2)$-quadruple consisting of four distinct positive
triangular numbers.
\end{theorem}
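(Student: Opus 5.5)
The plan is to verify the six pair conditions directly as algebraic identities in $u=u_n$ and $k$. Where an identity does not hold for all $u$, I will use the one quadratic invariant of the recurrence, which amounts to a Pellian equation satisfied by every $u_n$.

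\textbf{Integrality, positivity and distinctness.} Write $m=16k^2-1$, so that $T_m=8k^2(16k^2-1)$, and for $u=u_n$ put
\[
a=\frac{u-4k^2-k}{2k},\qquad b=\frac{u+4k^2-k}{2k},\qquad d=\frac{u^2}{k^2}-1 .
\]
By induction on the recurrence, $u_n\equiv k \pmod{2k}$ for all $n$. Indeed $u_0=k$ and $u_1\equiv -k\equiv k$, and then $u_{n+2}\equiv -2u_{n+1}-u_n\equiv -3k\equiv k$. Hence $a,b,d$ are integers.

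Since $64k^2-2\ge 62$, the sequence $(u_n)$ is strictly increasing. For $n\ge2$ this gives $u_n\ge u_2$, and $u_2$ is already far larger than $4k^2+k$ and $16k^3$. So $1\le a<b$, and I expect $b<16k^2-1<d$ or $16k^2-1<a<b<d$ once $u_n$ is large. The ordering only matters for distinctness, and a crude size comparison settles it.

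\textbf{Pairs that hold identically in $u$.} Using $a(a+1)=\frac{(u-k)^2-16k^4}{4k^2}$ and $b(b+1)=\frac{(u+k)^2-16k^4}{4k^2}$, I expand $T_aT_b+u^2$. With $c=16k^4$ this gives
\[
64k^4(T_aT_b+u^2)=(u^2+k^2-c)^2+(64k^4-4k^2)u^2=(u^2+16k^4-k^2)^2 .
\]
So $T_aT_b+u^2$ is a square for every admissible $u$. Next I write $T_d=\frac{u^2(u^2-k^2)}{2k^4}$, so that
\[
T_dT_x+u^2=\frac{u^2}{4k^4}\Bigl((u^2-k^2)(2x+1)^2-(u^2-k^2)+4k^4\Bigr).
\]
For $x=a$ and $x=b$ we have $2x+1=(u\mp4k^2)/k$. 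I expect the bracket to collapse to a perfect square polynomial in $u$, again with no Pell input, so I will check $\{T_a,T_d\}$ and $\{T_b,T_d\}$ by direct expansion.

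\textbf{Pairs involving $T_m$: the Pell input.} The remaining conditions concern $T_m$. For $T_mT_d$,
\[
T_mT_d+u^2=\frac{u^2}{k^2}\bigl(4(16k^2-1)(u^2-k^2)+k^2\bigr),
\]
so I need $4(16k^2-1)(u^2-k^2)+k^2$ to be a perfect square. Similarly $T_mT_a+u^2$ and $T_mT_b+u^2$ reduce to quadratic expressions in $u$ that must be squares. Here I use the recurrence. The relation
\[
(32k^2-1)^2-(16k^2-1)(8k)^2=1
\]
shows that $32k^2-1+8k\sqrt{16k^2-1}$ is a unit, and $64k^2-2$ is its trace. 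Therefore
\[
u_{n+1}^2-(64k^2-2)u_{n+1}u_n+u_n^2
\]
is constant, equal to its value at $n=0$. Equivalently, each $u_n$ satisfies a fixed equation $X^2-(16k^2-1)(\ldots)=\text{const}$. The square roots should be explicit linear combinations of $u_n$ and $u_{n\pm1}$, and I will substitute and simplify using the invariant.

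I expect this last step to be the main obstacle. It means finding the correct explicit square roots for $T_mT_a$, $T_mT_b$ and $T_mT_d$, and checking that the single invariant suffices for all three. My first attempt will be to compute the roots numerically for $k=1$ and small $n$, guess them as linear forms in $u_n,u_{n+1}$ with coefficients polynomial in $k$, and then verify those forms via the invariant.
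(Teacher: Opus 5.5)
Your route differs from the paper's and can be made to work. Two things need attention. The step you flag as the main obstacle is left undone, although it follows at once from the invariant you already wrote down. And two of your displayed formulas are wrong.

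\textbf{Comparison with the paper.} The paper never checks pairs one by one. Proposition~\ref{prop:triangular-construction} shows that $T_{16k^2-1}$ and $T_{u^2/k^2-1}$ are the two regular extensions $d_-$ and $d_+$ of the $D(u^2)$-triple $\{T_a,T_b,8u^2\}$. Hence five of the six conditions hold for every $u\equiv k\pmod{2k}$. Only the pair $\{d_-,d_+\}$ requires the equation $x^2-(64k^2-4)(u/k)^2=5-64k^2$. That equation holds for the $u_n$ by multiplicativity of the norm, because they are read off from $(-1+\sqrt{64k^2-4})(32k^2-1+4k\sqrt{64k^2-4})^n$. Your approach needs neither the regular-extension formula nor the auxiliary element $8u^2$. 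The cost is explicit algebra, and it does not explain where the sequence comes from.

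\textbf{Where the difficulty actually lies.} The pairs of $T_m$ with $T_a$ and $T_b$ need no Pell input, since identically in $u$
\[
T_mT_a+u^2=(4ku-16k^3+k)^2,\qquad T_mT_b+u^2=(4ku+16k^3-k)^2 .
\]
Only $T_mT_d$ uses the recurrence. Your invariant has value
\[
C=u_1^2-(64k^2-2)u_1u_0+u_0^2=16k^4(5-64k^2),
\]
and $(64k^2-2)^2-4=256k^2(16k^2-1)$. Completing the square in $u_{n+1}$ then gives
\[
4(16k^2-1)(u_n^2-k^2)+k^2=\left(\frac{2u_{n+1}-(64k^2-2)u_n}{8k}\right)^2 .
\]
The right-hand side is a rational number with integer square, hence an integer, so no numerical guessing is needed. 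The constants match because $2u_1-(64k^2-2)u_0=-8k^2$, which is exactly where the choice $u_1=k(32k^2-4k-1)$ enters.

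\textbf{Corrections.}
\begin{enumerate}
\item Your $\frac{(u-k)^2-16k^4}{4k^2}$ equals $ab$, not $a(a+1)$; in fact $a(a+1)=\frac{(u-4k^2)^2-k^2}{4k^2}$. This is harmless for $T_aT_b$, since $a(a+1)b(b+1)=ab(a+1)(b+1)$.
\item Your $T_dT_x$ display is off by a factor of $4$. It should read
\[
T_dT_x+u^2=\frac{u^2}{16k^4}\Bigl((u^2-k^2)\bigl((2x+1)^2-1\bigr)+16k^4\Bigr).
\]
With your constants the bracket is not a perfect square in general. With the correct ones it equals $(u^2\mp4k^2u-k^2)^2/k^2$ for $x=a,b$, so the expansion goes through as you expected.
\item For distinctness you need $u>32k^3+4k^2-k$, which is equivalent to $a>16k^2-1$; a comparison with $16k^3$ is not enough. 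The value $u_2$ clears this bound easily. For $n\ge2$ only the ordering $16k^2-1<a<b<d$ occurs, whereas at $n=1$ one has $b=16k^2-1$.
\end{enumerate}
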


\section{Triples with smallest element \texorpdfstring{$T_{4u-1}$ or $T_{4u}$}{T(4u-1) or T(4u)}}\label{sec:triples}
Let $u$ be a positive integer and $n\in\{4u-1,4u\}$. If
$\{T_n,T_a\}$ is a $D(u^2)$-pair for a positive integer $a$, then
there exists $r\in\mathbb N$ such that
\[
T_nT_a+u^2=r^2.
\]
Substituting $T_n=n(n+1)/2$ and $T_a=a(a+1)/2$, and setting
\[
x=4r,\qquad y=2a+1,
\]
we obtain the generalized Pellian equation
\begin{equation}\label{eq:pell-minus}
x^2-n(n+1)y^2=16u^2-n(n+1).
\end{equation}
Put $D=n(n+1)$ and $U=2n+1$. The associated Pell equation
$X^2-DY^2=1$ has fundamental solution $(U,2)$.
The standard bounds for fundamental solutions $(x^*,y^*)$ of
\eqref{eq:pell-minus} \cite[Theorem~10.21]{knjiga} give
\[
\begin{cases}
0\le y^*\le1, & n=4u-1,\\
0<y^*\le1, & n=4u.
\end{cases}
\]
Thus, up to signs, the fundamental solutions are $(4u,1)$ and,
only when $n=4u-1$ and $u$ is a perfect square, $(2\sqrt u,0)$.
It is easy to see that the latter generates only solutions with even $y$. 
Hence this entire class is excluded by the condition $y=2a+1$.
The two choices $\pm4u+\sqrt D$ are related, up to an overall sign,
by multiplication by the $U+2\sqrt{D}$, since
\[
(-4u+\sqrt D)(U+2\sqrt D)=(U-8u)(4u+\sqrt D),
\quad U-8u\in\{-1,1\}.
\]
Thus, they do not generate distinct sequences of positive solutions. Consequently, all positive solutions of \eqref{eq:pell-minus} with
odd $y$ are given by
\begin{equation}
x_k+y_k\sqrt D=(4u+\sqrt D)(U+2\sqrt D)^k,
\quad k\ge0.
\end{equation}
From this, we get that the sequence $(y_k)_{k\ge0}$ satisfies
\begin{equation}\label{eq:yminus-rec}
y_0=1,\qquad y_1=8u+U,\qquad
y_{k+2}=2Uy_{k+1}-y_k\quad(k\ge0).
\end{equation}
Setting
\begin{equation}\label{eq:aminus-def}
a_k=\frac{y_k-1}{2},
\end{equation}
we obtain
\begingroup
\small
\begin{equation}\label{eq:aminus-rec}
a_1=n+4u,\quad
a_{k+2}=2Ua_{k+1}-a_k+2n\quad(k\ge0).
\end{equation}
\endgroup
Thus, the positive indices $a$ for which $\{T_n,T_a\}$ is a
$D(u^2)$-pair are precisely $a_k$ for $k\ge1$.
Moreover, $a_k>n$ for every $k\ge1$.

The following two lemmas follow from
\eqref{eq:yminus-rec} and can be proven by mathematical induction using the congruence $U^2\equiv1\pmod{8u}$.
\begin{lemma}[{\cite[Lemma~4]{BliznacTriangular2}}]\label{lem:minus-identities}
For all $k\ge0$,
\begin{equation}\label{eq:minus-quadratic}
y_{k+1}^2-2Uy_ky_{k+1}+y_k^2=4(16u^2-n(n+1)).
\end{equation}
\end{lemma}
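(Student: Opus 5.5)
The identity says that the quadratic form $Q(y_k,y_{k+1}) = y_{k+1}^2-2Uy_ky_{k+1}+y_k^2$ is constant along the sequence. I will therefore prove it by induction on $k$: first check the base case $k=0$ directly, then show $Q(y_{k+1},y_{k+2})=Q(y_k,y_{k+1})$ using the recurrence \eqref{eq:yminus-rec}. This is the standard invariance of such a form for second-order linear recurrences with constant coefficient $2U$ and last coefficient $-1$.

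\emph{Base case.} With $y_0=1$ and $y_1=8u+U$,
\[
y_1^2-2Uy_0y_1+y_0^2=(8u+U)^2-2U(8u+U)+1=64u^2-U^2+1 .
\]
Since $U^2=(2n+1)^2=4n(n+1)+1$, this equals $64u^2-4n(n+1)=4(16u^2-n(n+1))$, which is the required right-hand side. Note that this step uses neither $n=4u-1$ nor $n=4u$; it only uses $U^2-1=4D$.

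\emph{Inductive step.} Substitute $y_{k+2}=2Uy_{k+1}-y_k$ into $y_{k+2}^2-2Uy_{k+1}y_{k+2}+y_{k+1}^2$. Write it as
\[
y_{k+2}\bigl(y_{k+2}-2Uy_{k+1}\bigr)+y_{k+1}^2 .
\]
The bracket equals $-y_k$, so the expression becomes
\[
-y_ky_{k+2}+y_{k+1}^2=-y_k(2Uy_{k+1}-y_k)+y_{k+1}^2=y_{k+1}^2-2Uy_ky_{k+1}+y_k^2 .
\]
This is the form at index $k$, which completes the induction.

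An alternative route would be to write $y_k$ in closed form via $(4u+\sqrt D)(U+2\sqrt D)^k$ and expand using norms. The induction is shorter, so I will present that. I do not expect a genuine obstacle: the only point needing care is the base-case arithmetic with $U^2=4D+1$, and the hint about $U^2\equiv1\pmod{8u}$ in the paper seems to be needed only for the companion lemma, not for this identity.
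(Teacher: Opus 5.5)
Your proof is correct and follows the route the paper indicates: induction on $k$ via the recurrence \eqref{eq:yminus-rec}. The base case reduces to $U^2-1=4n(n+1)$, and the inductive step shows the quadratic form is invariant under $(y_k,y_{k+1})\mapsto(y_{k+1},2Uy_{k+1}-y_k)$. You are also right that the congruence $U^2\equiv1\pmod{8u}$ mentioned in the paper is needed only for Lemma~\ref{lem:minus-quadratic}, not for this identity.
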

\begin{lemma}\label{lem:minus-quadratic}
For all $k\ge0$,
\[
y_k\equiv U^k\pmod{8u}.
\]
\end{lemma}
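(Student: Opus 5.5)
Lemma~\ref{lem:minus-quadratic} claims $y_k\equiv U^k\pmod{8u}$ for all $k\ge0$. I will prove it by strong induction on $k$, using the recurrence \eqref{eq:yminus-rec} together with the congruence $U^2\equiv1\pmod{8u}$.

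First I verify that congruence. For $n=4u-1$ we have $U=8u-1$. For $n=4u$ we have $U=8u+1$. In both cases $U\equiv\pm1\pmod{8u}$, so $U^2\equiv1\pmod{8u}$.

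Next come the base cases. For $k=0$, $y_0=1=U^0$. For $k=1$, $y_1=8u+U\equiv U\pmod{8u}$.

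For the inductive step, assume $y_k\equiv U^k$ and $y_{k+1}\equiv U^{k+1}\pmod{8u}$. The recurrence then gives
\[
y_{k+2}=2Uy_{k+1}-y_k\equiv 2U^{k+2}-U^k\pmod{8u}.
\]
Since $U^2\equiv1$, we have $U^k\equiv U^{k+2}\pmod{8u}$. Hence $2U^{k+2}-U^k\equiv U^{k+2}$, and so $y_{k+2}\equiv U^{k+2}\pmod{8u}$, which completes the induction.

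There is no real obstacle here; the only points needing care are the base case $y_1$ and replacing $U^k$ by $U^{k+2}$. As an alternative check, one can use the closed form $y_k=\big((4u+\sqrt D)(U+2\sqrt D)^k-(4u-\sqrt D)(U-2\sqrt D)^k\big)/(2\sqrt D)$. Expanding it gives $y_k=U^k+(\text{terms divisible by }8u)$, because every term of the binomial expansion other than $U^k$ carries a factor $4u\cdot 2$ or a factor $D=U^2-... $. I would rely on the induction argument, since it is cleaner.
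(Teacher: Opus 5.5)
Your proof is correct and is the argument the paper indicates: induction on the recurrence \eqref{eq:yminus-rec} with base cases $y_0=1$ and $y_1=8u+U$, using $U=8u\pm1$ to get $U^2\equiv1\pmod{8u}$ and hence $U^k\equiv U^{k+2}\pmod{8u}$. Your closing remark on the closed form is garbled as written; what it actually needs is $4D=U^2-1\equiv0\pmod{8u}$ together with the evenness of the $\sqrt D$-coefficient of $(U+2\sqrt D)^k$. Since you rely on the induction, this does not affect the proof.
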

It follows that $y_k^2\equiv1\pmod{8u}$. Since
\[
T_{a_k}=\frac{a_k(a_k+1)}{2}
=\frac{(2a_k+1)^2-1}{8}
=\frac{y_k^2-1}{8},
\]
we have $T_{a_k}=uA_k$, where $A_k=(y_k^2-1)/(8u)$ is a
nonnegative integer, positive for $k\ge1$.

We can now classify all $D(u^2)$-triples of triangular numbers whose
smallest element is $T_{4u-1}$ or $T_{4u}$.
\begin{theorem}\label{thm:minus-main}
Let $u\in\mathbb N$, $n\in\{4u-1,4u\}$ and $U=2n+1$, and let
$(a_k)_{k\ge0}$ be defined by \eqref{eq:aminus-rec}. Then:
\begin{enumerate}
\item For every $k\ge1$, the set
\[
\{T_n,T_{a_k},T_{a_{k+1}}\}
\]
is a $D(u^2)$-triple.
\item Conversely, if $a<b$ and $\{T_n,T_a,T_b\}$ is a
$D(u^2)$-triple of triangular numbers, then
\[
a=a_k,\qquad b=a_{k+1}
\]
for a unique $k\ge1$.
\end{enumerate}
\end{theorem}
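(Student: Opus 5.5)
The plan is to work entirely with the odd numbers $y_k=2a_k+1$ and the identity $T_{a_k}=(y_k^2-1)/8$. Both parts of Theorem~\ref{thm:minus-main} then become statements about the single quadratic form $Q(y,y')=y^2-2Uyy'+y'^2$. Lemma~\ref{lem:minus-identities} says this form takes the constant value $4(16u^2-D)$ on consecutive terms. The key fact used throughout is $U^2=4D+1$, since $D=n(n+1)$ and $U=2n+1$.

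\emph{Part (1).} Compute $64\bigl(T_{a_k}T_{a_{k+1}}+u^2\bigr)=(y_k^2-1)(y_{k+1}^2-1)+64u^2$. By Lemma~\ref{lem:minus-identities} we may replace $y_k^2+y_{k+1}^2$ by $2Uy_ky_{k+1}+4(16u^2-D)$. The $64u^2$ terms then cancel, and $4D+1=U^2$ turns the expression into
\[
(y_ky_{k+1})^2-2Uy_ky_{k+1}+U^2=(y_ky_{k+1}-U)^2 .
\]
Hence $T_{a_k}T_{a_{k+1}}+u^2$ is an integer that is the square of the rational number $(y_ky_{k+1}-U)/8$, so it is a perfect square. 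As a check, Lemma~\ref{lem:minus-quadratic} gives $y_ky_{k+1}\equiv U^{2k+1}\equiv U\pmod{8u}$. The pairs $\{T_n,T_{a_k}\}$ and $\{T_n,T_{a_{k+1}}\}$ are $D(u^2)$-pairs by construction. Distinctness holds because $n<a_k<a_{k+1}$.

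\emph{Part (2).} If $\{T_n,T_a,T_b\}$ is a triple with $a<b$, then $\{T_n,T_a\}$ and $\{T_n,T_b\}$ are $D(u^2)$-pairs. So $a=a_i$ and $b=a_j$ with $1\le i<j$, by the description of all solutions of \eqref{eq:pell-minus} with odd $y$. It remains to show $j=i+1$; uniqueness of $k$ is then clear since $(a_k)$ is strictly increasing. Write $64(T_aT_b+u^2)=m^2$ and set $m=y_iy_j-t$. The condition becomes
\[
y_j^2-2t\,y_iy_j+y_i^2=1+64u^2-t^2 ,
\]
which is a form of the same shape as $Q$ but with $t$ in place of $U$. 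Since $y_j^2+y_i^2-1-64u^2>0$, we get $t>0$. Since also $y_j^2+y_i^2<2y_iy_j\cdot y_j/y_i$, we have $t<y_j/y_i$ roughly. The next step is to pin $t$ down using the Pell structure. Expand $y_j$ in terms of $y_i$ and $x_i$ via $x_j+y_j\sqrt D=(x_i+y_i\sqrt D)(U+2\sqrt D)^{j-i}$, and substitute into the displayed equation. Using $x_i^2-Dy_i^2=16u^2-D$, the equation should force $t$ to equal the ``$U$-analogue'' for step $j-i$, namely $t=X_{j-i}$, where $X_m+Y_m\sqrt D=(U+2\sqrt D)^m$. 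It should also force a correction term $1+64u^2-X_{j-i}^2$ to match $4(16u^2-D)Y_{j-i}^2$-type quantities. Concretely, I expect $y_i^2-2X_my_iy_j+y_j^2=4(16u^2-D)Y_m^2/4$ with $m=j-i$. Comparing with the required right-hand side $1+64u^2-t^2$ then gives
\[
X_m^2-(16u^2-D)Y_m^2=1+64u^2 ,
\]
which, with $X_m^2-DY_m^2=1$, reduces to $16u^2Y_m^2=64u^2$, i.e.\ $Y_m=2$, i.e.\ $m=1$.

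The main obstacle is the middle step. One must justify that $t$ must equal $X_{j-i}$ exactly, rather than merely lie near it. I would first try an interval argument. Show that for $m=j-i\ge2$, $(y_iy_j)^2-y_i^2-y_j^2+1+64u^2$ lies strictly between $(y_iy_j-X_m)^2$ and the next admissible square $(y_iy_j-X_m\pm 8)^2$. The step $8$ is used because $m\equiv y_iy_j$ modulo $8$. The estimates should come from $|Q_m(y_i,y_j)|$ being controlled by $u^2Y_m^2$ while $y_iy_j$ grows much faster. If this direct squeezing is too delicate for small $i$, I would fall back on a Vieta-jumping descent in $(y_i,y_j)$ for fixed $t$. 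That descent reduces to a pair with $i=0$, where $y_0=1$ makes $T_{a_0}=0$ and the equation trivial. The value of $t$ would then be read off there.
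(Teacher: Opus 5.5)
Part (1) is correct and is essentially the paper's computation. Part (2), however, has a genuine gap at exactly the step you call the main obstacle: nothing forces $t=X_{j-i}$, and neither remedy you propose can supply this.

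Your own identity makes the situation explicit. With $m=j-i$ and $y_j=X_my_i+Y_mx_i$ one gets $y_i^2-2X_my_iy_j+y_j^2=(16u^2-D)Y_m^2$, hence
\[
64\bigl(T_{a_i}T_{a_j}+u^2\bigr)=(y_iy_j-X_m)^2-16u^2\bigl(Y_m^2-4\bigr).
\]
So $t=X_m$ does force $Y_m=2$. (Your displayed relation should read $X_m^2+(16u^2-D)Y_m^2=1+64u^2$.)

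\emph{The squeeze.} Since $y_iy_j\equiv X_m\pmod 8$, the admissible square roots are $y_iy_j-X_m-8s$. Consecutive admissible squares near the top differ by about $16y_iy_j\approx32X_my_i^2$, whereas the deficit $16u^2(Y_m^2-4)$ is about $X_m^2$. So the squeeze works only while $X_{j-i}$ is at most of order $32y_i^2$, i.e.\ when $j-i$ is small compared with $i$. For fixed $i$ and large $j$ (already $i=1$, $j=4$ for large $u$), the gap contains more and more admissible values of $t$. Excluding them all means solving the simultaneous Pellian equations $x^2-Dy^2=16u^2-D$ and $z^2-(y_i^2-1)y^2=64u^2+1-y_i^2$ in the common unknown $y=y_j$. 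Size estimates of this kind cannot settle that.

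\emph{The Vieta-jumping fallback.} For $t\ne U$, the jump $(y_i,y_j)\mapsto(2ty_i-y_j,y_i)$ preserves only the condition on $T_aT_b+u^2$, not the conditions involving $T_n$. So the new entries are no longer terms of $(y_k)$, and the descent need not end at $i=0$. It agrees with the Pell recurrence only when $t=U$, which is what you are trying to prove.

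The missing ingredient is the deep input the paper uses. By \eqref{glavnajednakost} you can adjoin $8u^2$. All of $T_n,8u^2,T_{a_i},T_{a_j}$ are divisible by $u$, and dividing by $u$ gives the $D(1)$-quadruple $\{U-1,U+1,A_i,A_j\}$. Fujita's theorem says every Diophantine quadruple containing $\{k-1,k+1\}$ is regular; it is proved by Baker-type methods. It yields $(2U-A_i-A_j)^2=4U^2(A_iA_j+1)$, a quadratic in $A_j$ with roots $A_{i-1}$ and $A_{i+1}$, so $j=i+1$. What you are trying to prove is essentially the case $k=U$ of that theorem. That is why an elementary squeeze or descent cannot be expected to close the argument.
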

\begin{proof}
Using Lemma~\ref{lem:minus-identities}, we obtain
\[
T_nT_{a_k}+u^2=\left(\frac{y_{k+1}-Uy_k}{8}\right)^2
\]
and
\[
T_{a_k}T_{a_{k+1}}+u^2
=\left(\frac{y_ky_{k+1}-U}{8}\right)^2.
\]
The quotients on the right-hand sides are integers by
Lemma~\ref{lem:minus-quadratic} and the congruence
$U^2\equiv1\pmod{8u}$. This proves the first assertion.

Conversely, let $a<b$ and suppose that $\{T_n,T_a,T_b\}$ is a
$D(u^2)$-triple. Since $\{T_n,T_a\}$ and $\{T_n,T_b\}$ are
$D(u^2)$-pairs, there exist $1\le i<j$ such that $a=a_i$ and $b=a_j$.
It remains to prove that $j=i+1$. By \eqref{glavnajednakost}, the set
\begin{equation}\label{cetvorka}
\{T_n,8u^2,T_{a_i},T_{a_j}\}
\end{equation}
has the property $D(u^2)$.
If $n=4u-1$, then $U=8u-1$, $T_n=u(U-1)$ and
$8u^2=u(U+1)$. If $n=4u$, then $U=8u+1$,
$8u^2=u(U-1)$ and $T_n=u(U+1)$.
Thus, every element of \eqref{cetvorka} is divisible by $u$.
Dividing these elements by $u$ gives a $D(1)$-quadruple
\[
\{U-1,U+1,A_i,A_j\}.
\]
The sequence $(A_k)_{k\ge0}$ is strictly increasing, and
$A_i\ge A_1=4U>U+1$ for $i\ge1$.
By \cite{Fujita} and \cite[p.~334, following Theorem~1]{Dujella}, every Diophantine
quadruple containing $\{U-1,U+1\}$ is regular.
Hence, by \eqref{eq:regular-quadruple},
\begin{equation}\label{eq:regular-minus}
(2U-A_i-A_j)^2=4U^2(A_iA_j+1).
\end{equation}
As a quadratic equation in $X=A_j$, this becomes
\begin{equation}\label{eq:quadratic-minus}
X^2-\bigl((4U^2-2)A_i+4U\bigr)X+A_i^2-4UA_i=0.
\end{equation}
The recurrence \eqref{eq:yminus-rec} and
Lemma~\ref{lem:minus-identities} yield
\begin{align}
A_{i-1}+A_{i+1}&=(4U^2-2)A_i+4U,\label{eq:Bminus-sum}\\
A_{i-1}A_{i+1}&=A_i^2-4UA_i.\label{eq:Bminus-product}
\end{align}
Therefore, the roots of \eqref{eq:quadratic-minus} are precisely
$A_{i-1}$ and $A_{i+1}$. Since $j>i$ and $(A_k)_{k\ge0}$ is strictly
increasing, we have $A_j=A_{i+1}$, and hence $j=i+1$.
\end{proof}

\begin{example}
For $u=1$ and $n=3$, we have
\[
(a_k)_{k\ge0}=(0,7,104,1455,20272,\ldots).
\]
Thus, the first few $D(1)$-pairs of triangular numbers with smallest
element $T_3$ are
\[
\{\mathbf{T_3},T_7\},\quad
\{\mathbf{T_3},T_{104}\},\quad
\{\mathbf{T_3},T_{1455}\},\quad
\{\mathbf{T_3},T_{20272}\},\quad\ldots
\]
By Theorem~\ref{thm:minus-main}, the first few $D(1)$-triples of
triangular numbers with smallest element $T_3$ are
\[
\{\mathbf{T_3},T_7,T_{104}\},\quad
\{\mathbf{T_3},T_{104},T_{1455}\},\quad
\{\mathbf{T_3},T_{1455},T_{20272}\},\quad\ldots
\]
\end{example}

The classification immediately rules out any additional triangular
element extending one of these triples to a quadruple.
\begin{cor}\label{korolar}
Let $u\in\mathbb N$. There is no $D(u^2)$-quadruple consisting only
of triangular numbers that contains $T_{4u-1}$ or $T_{4u}$.
\end{cor}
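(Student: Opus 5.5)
The plan is to reduce to Theorem~\ref{thm:minus-main}. The first step is to show that $T_n$ must be the smallest element of any such quadruple. After that, the uniqueness statement in part~(2) of the theorem, applied to two different triples inside the quadruple, gives a contradiction.

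Let $u\in\mathbb N$ and $n\in\{4u-1,4u\}$. Suppose $\{T_n,T_a,T_b,T_c\}$ is a $D(u^2)$-quadruple of positive triangular numbers, with $a,b,c$ positive integers. Since $m\mapsto T_m$ is strictly increasing on $\mathbb N$, distinctness of the elements gives that $n,a,b,c$ are pairwise distinct. After relabelling we may assume $a<b<c$.

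\textbf{Step 1 ($T_n$ is the smallest element).} Each of $\{T_n,T_a\}$, $\{T_n,T_b\}$, $\{T_n,T_c\}$ is a $D(u^2)$-pair. By the description of solutions of \eqref{eq:pell-minus} given before Lemma~\ref{lem:minus-identities}, the positive indices $m$ for which $\{T_n,T_m\}$ is a $D(u^2)$-pair are exactly the $a_k$ with $k\ge1$. Hence
\[
a=a_i,\qquad b=a_j,\qquad c=a_l
\]
for some $i,j,l\ge1$. Because $(a_k)$ is strictly increasing and $a<b<c$, we get $1\le i<j<l$. We also know $a_k>n$ for every $k\ge1$, so $T_n<T_a<T_b<T_c$. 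This matters because $T_n$ was not assumed to be the smallest element, and Theorem~\ref{thm:minus-main} concerns triples with smallest element $T_n$.

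\textbf{Step 2 (contradiction from part~(2)).} Both $\{T_n,T_a,T_b\}$ and $\{T_n,T_a,T_c\}$ are $D(u^2)$-triples of triangular numbers, since they are subsets of the quadruple.
\begin{itemize}
\item Applying part~(2) of Theorem~\ref{thm:minus-main} to $\{T_n,T_{a_i},T_{a_j}\}$, where $a_i<a_j$, gives $j=i+1$.
\item Applying it to $\{T_n,T_{a_i},T_{a_l}\}$, where $a_i<a_l$, gives $l=i+1$.
\end{itemize}
Hence $j=l$, which contradicts $j<l$. So no such quadruple exists.

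\textbf{Main obstacle.} The only delicate point is Step~1. The corollary does not say that $T_n$ is the smallest element, so one must derive it from the fact that every partner index is some $a_k$ with $k\ge1$, and hence exceeds $n$. It also needs the observation that $a_0=0$ is excluded, because $T_0=0$ is not a positive element of a $D(u^2)$-tuple. Once this is settled, the rest follows directly from the uniqueness of the successor in Theorem~\ref{thm:minus-main}.
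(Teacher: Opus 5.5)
Your proof is correct and is essentially the paper's: apply part~(2) of Theorem~\ref{thm:minus-main} to $\{T_n,T_a,T_b\}$ and to $\{T_n,T_a,T_c\}$, which forces $b=c=a_{i+1}$, a contradiction. Your Step~1 is harmless extra care rather than a needed step. Part~(2) as stated does not assume $T_n$ is the smallest element, and the pair description already gives $a,b,c>n$.
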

\begin{proof}
Let $n\in\{4u-1,4u\}$. Assume that
\[
\{T_n,T_a,T_b,T_c\},\qquad a<b<c,
\]
is a $D(u^2)$-quadruple of triangular numbers.
Applying Theorem~\ref{thm:minus-main} to $\{T_n,T_a,T_b\}$, we find
a unique $i\ge1$ such that $a=a_i$ and $b=a_{i+1}$.
Applying the same theorem to $\{T_n,T_a,T_c\}$ gives
$a=a_i$ and $c=a_{i+1}$. Thus $b=c$, a contradiction.
\end{proof}

\section{Regularity of triples of triangular numbers}\label{sec:regularity}
Throughout this section, $N$ is a positive integer and $T_n=n(n+1)/2$.
A $D(N)$-triple $\{T_a,T_b,T_c\}$, where $1\le a<b<c$, is regular if
$T_c=T_a+T_b+2\sqrt{T_aT_b+N}$.
The minus sign cannot occur for the largest element, since
$T_a+T_b-2\sqrt{T_aT_b+N}<T_b$.
Equivalently, regularity is expressed by
\begin{equation}\label{eq:triangular-regularity}
(T_c-T_a-T_b)^2=4(T_aT_b+N).
\end{equation}

\begin{theorem}\label{thm:finite-regular-triangular}
For every fixed positive integer $N$, there are only finitely many
regular $D(N)$-triples of triangular numbers. More precisely, if
$\{T_a,T_b,T_c\}$ is such a triple with $1\le a<b<c$, then
$c\ge a+b+1$ and
\begin{equation}\label{eq:regular-index-bounds}
(a+1)(b+1)(a+b+1)\le4N.
\end{equation}
\end{theorem}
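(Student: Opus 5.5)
Regularity means $T_c=T_a+T_b+2r$ with $r=\sqrt{T_aT_b+N}$, and $r^2>T_aT_b$, so $r>\sqrt{T_aT_b}$. The plan is to use the identity
\[
T_{a+b}=T_a+T_b+ab,
\]
which shows that $T_c>T_a+T_b+2\sqrt{T_aT_b}$ exceeds $T_{a+b}$. This forces $c\ge a+b+1$. The key comparison is that $2\sqrt{T_aT_b}=\sqrt{ab(a+1)(b+1)}\ge ab$, so $T_c>T_a+T_b+ab=T_{a+b}$, and hence $c\ge a+b+1$ since $T$ is strictly increasing.

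For \eqref{eq:regular-index-bounds} I will turn $c\ge a+b+1$ into an upper bound on $N$. From $T_c\ge T_{a+b+1}$ we get
\[
2r=T_c-T_a-T_b\ge T_{a+b+1}-T_a-T_b=ab+a+b+1=(a+1)(b+1).
\]
Squaring and using \eqref{eq:triangular-regularity} gives
\[
4T_aT_b+4N\ge (a+1)^2(b+1)^2,
\]
where $4T_aT_b=ab(a+1)(b+1)$. Therefore
\[
4N\ge (a+1)(b+1)\bigl[(a+1)(b+1)-ab\bigr]=(a+1)(b+1)(a+b+1),
\]
which is exactly \eqref{eq:regular-index-bounds}.

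Finiteness then follows at once. The inequality bounds $a$ and $b$ in terms of $N$, since the left side tends to infinity in each variable. Once $a$ and $b$ are fixed, $T_c$ is determined by regularity, so $c$ is determined as well. Hence there are only finitely many regular triples for each $N$.

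There is no real obstacle here. The only points needing care are the strict inequality $r>\sqrt{T_aT_b}$, which holds because $N>0$, and checking the identity $T_{a+b+1}-T_a-T_b=(a+1)(b+1)$.
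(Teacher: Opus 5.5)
Your proposal is correct and follows essentially the same route as the paper. You use $2\sqrt{T_aT_b+N}>ab$ to get $T_c>T_{a+b}$, then $2\sqrt{T_aT_b+N}\ge T_{a+b+1}-T_a-T_b=(a+1)(b+1)$, square and subtract $a(a+1)b(b+1)$ to get the bound, and conclude finiteness because $a,b$ are bounded and regularity determines $T_c$.
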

\begin{proof}
Set $E=T_c-T_a-T_b=2\sqrt{T_aT_b+N}$. Since $N>0$, we have
\[
E>\sqrt{a(a+1)b(b+1)}>ab.
\]
Thus $T_c>T_a+T_b+ab=T_{a+b}$, which implies $c\ge a+b+1$.
Consequently,
\[
E\ge T_{a+b+1}-T_a-T_b=(a+1)(b+1).
\]
Using \eqref{eq:triangular-regularity}, we obtain
\[
\begin{aligned}
4N&=E^2-a(a+1)b(b+1)\\
&\ge(a+1)^2(b+1)^2-a(a+1)b(b+1)\\
&=(a+1)(b+1)(a+b+1),
\end{aligned}
\]
proving \eqref{eq:regular-index-bounds}.

Finally, $a\ge1$ and \eqref{eq:regular-index-bounds} yield
$(b+1)(b+2)\le2N$. Hence $b$ is bounded in terms of $N$, and
$1\le a<b$ leaves only finitely many pairs $(a,b)$. For each pair,
regularity determines $T_c=T_a+T_b+2\sqrt{T_aT_b+N}$ uniquely.
This proves the finiteness assertion.
\end{proof}

\begin{cor}\label{cor:two-families-irregular}
Let $u$ be a positive integer. Every $D(u^2)$-triple of triangular
numbers whose smallest element is $T_{4u-1}$ or $T_{4u}$ is irregular.
In particular, all triples classified in Theorem~\ref{thm:minus-main}
are irregular.
\end{cor}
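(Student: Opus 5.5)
The plan is to use the index bound \eqref{eq:regular-index-bounds} from Theorem~\ref{thm:finite-regular-triangular}, with $N=u^2$. Let $\{T_n,T_a,T_b\}$ be a $D(u^2)$-triple of triangular numbers whose smallest element is $T_n$, where $n\in\{4u-1,4u\}$. Suppose it is regular.

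Write the indices in increasing order. Since $T_n$ is the smallest element and $m\mapsto T_m$ is strictly increasing, the two smallest indices are $n$ and $\min(a,b)>n$. In the notation of Theorem~\ref{thm:finite-regular-triangular}, the smallest index is $n\ge 4u-1$ and the middle index is at least $n+1\ge 4u$. Then \eqref{eq:regular-index-bounds} gives
\[
4u^2\ge (n+1)(n+2)(2n+2)\ge 4u(4u+1)\cdot 8u,
\]
that is, $4u^2\ge 32u^2(4u+1)$. This is impossible for every $u\ge1$.

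One could also invoke Theorem~\ref{thm:minus-main} to note that the middle index is $a_k$ with $a_k>n$, but this is not needed. It suffices that the two triangular numbers other than $T_n$ are larger than $T_n$, which holds because $T_n$ is the smallest element of the set.

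The "in particular" clause follows at once. Every triple $\{T_n,T_{a_k},T_{a_{k+1}}\}$ in Theorem~\ref{thm:minus-main} has smallest element $T_n$, since $a_k>n$ for $k\ge1$.

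I expect no real obstacle. The only points to check are that the ordering assumption $1\le a<b<c$ in Theorem~\ref{thm:finite-regular-triangular} matches: the smallest index is $n\ge3$, which is at least $1$. One should also note that regularity is only defined with the plus sign for the largest element, as remarked at the start of the section.
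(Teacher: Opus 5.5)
Your proposal is correct and is essentially the paper's proof. Both apply \eqref{eq:regular-index-bounds} with $N=u^2$, using smallest index $n\ge4u-1$ and middle index at least $n+1$, to obtain $4u^2\ge2(n+1)^2(n+2)\ge32u^2(4u+1)$, which is a contradiction.
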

\begin{proof}
Suppose that such a triple $\{T_a,T_b,T_c\}$, with $a<b<c$, is
regular. Its smallest index is $a=4u-1$ or $a=4u$, so in either case
$a\ge4u-1$. Since $b\ge a+1$, applying
\eqref{eq:regular-index-bounds} with $N=u^2$ yields
\[
\begin{aligned}
4u^2&\ge(a+1)(b+1)(a+b+1)\\
&\ge2(a+1)^2(a+2)\\
&\ge32u^2(4u+1).
\end{aligned}
\]
Thus $8(4u+1)\le1$, contradicting $u\ge1$.
\end{proof}

\begin{theorem}\label{thm:regular-congruence-obstructions}
\label{prop:regular-mod-three}
Let $N$ be a positive integer. There is no regular $D(N)$-triple of
triangular numbers if $N\equiv2\pmod3$.
\end{theorem}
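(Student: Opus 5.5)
The plan is to reduce everything modulo $3$, using only the $D(N)$ property and the regularity relation \eqref{eq:triangular-regularity}. Suppose, for contradiction, that $\{T_a,T_b,T_c\}$ with $1\le a<b<c$ is a regular $D(N)$-triple and $N\equiv2\pmod3$.

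First I will record the possible residues of triangular numbers modulo $3$. Taking $n\equiv0,1,2\pmod3$ gives $T_n=n(n+1)/2\equiv0,1,0\pmod3$ respectively. So every triangular number is $\equiv0$ or $1\pmod 3$.

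Next I will use the $D(N)$ property on each pair. Squares are $\equiv0$ or $1\pmod3$, and the product of two triangular numbers is also $\equiv0$ or $1\pmod3$. With $N\equiv2$, the number $T_xT_y+N$ is therefore $\equiv2$ or $0\pmod 3$. Since it must be a square, it cannot be $\equiv2$, so it is $\equiv0$, which forces $T_xT_y\equiv1\pmod3$. Applying this to the pairs $\{T_a,T_b\}$ and $\{T_a,T_c\}$ gives
\[
T_a\equiv T_b\equiv T_c\equiv1\pmod3.
\]
It also gives $T_aT_b+N\equiv0\pmod 3$.

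Finally I will plug these residues into \eqref{eq:triangular-regularity}. The left-hand side is $(T_c-T_a-T_b)^2\equiv(1-1-1)^2=1\pmod3$. The right-hand side is $4(T_aT_b+N)\equiv0\pmod3$. These cannot be equal, which is the desired contradiction.

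I do not expect a real obstacle here. The only point needing care is to notice that the $D(N)$ condition on the pairs forces all three residues to equal $1$ modulo $3$; the regularity equation then fails immediately.
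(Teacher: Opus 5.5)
Your proof is correct and uses essentially the same mod-$3$ argument as the paper: triangular numbers are $\equiv 0$ or $1 \pmod 3$, and this is combined with \eqref{eq:triangular-regularity}. The only difference is bookkeeping. You first use the $D(N)$ conditions on the pairs to force $T_a\equiv T_b\equiv T_c\equiv1\pmod 3$, which leaves a single residue pattern to check. The paper instead substitutes all residue combinations directly into \eqref{eq:triangular-regularity} and leaves that eight-case check to the reader.
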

\begin{proof}
Triangular numbers are congruent to $0$ or $1$ modulo $3$.
Substituting these residues into \eqref{eq:triangular-regularity}
gives $N\equiv0,1\pmod3$.
\end{proof}

In the following example, we use Theorem~\ref{thm:finite-regular-triangular}
to determine all regular $D(N)$-triples of triangular numbers for small $N$.
\begin{example}\label{ex:small-parameters}
\label{prop:small-square-parameters}
All regular $D(N)$-triples of triangular numbers for $1\le N\le36$
are listed below:
\[
\begin{array}{c|l}
N & \text{regular triples}\\ \hline
6  & \{1,3,10\}\\
10 & \{1,6,15\}\\
15 & \{1,10,21\}\\
18 & \{3,6,21\}\\
21 & \{1,15,28\}\\
28 & \{1,21,36\}\\
36 & \{1,28,45\},\quad\{3,15,36\}.
\end{array}
\]
\end{example}

\section{Quadruples of triangular numbers}\label{sec:quadruples}

The idea is to start with a $D(u^2)$-pair
$\{T_p,T_q\}$, where $u=(T_q-T_p)/2$ and $q=p+4k$ for some positive integer
$k$. We then extend this pair to a triple by adjoining $8u^2$ and consider its
two regular extension to a quadruple with elements $d_\pm$. This construction ensures that $d_\pm$ are are also triangular numbers. For easier notation, we express these numbers in terms of $u$ and $k$ and give the following proposition. We also need some additional conditions, the need of which is explained in the beginning of the proof.

\begin{proposition}\label{prop:triangular-construction}
Let $k$ and $u$ be positive integers satisfying
\begin{equation}\label{eq:construction-conditions}
u\equiv k\pmod{2k},
\qquad
u\ge k(32k^2+4k+1).
\end{equation}
Then the set
\begin{equation}\label{eq:construction-quadruple}
\left\{
T_{16k^2-1},\;
T_{\frac{u-4k^2-k}{2k}},\;
T_{\frac{u+4k^2-k}{2k}},\;
T_{\frac{u^2}{k^2}-1}
\right\}
\end{equation}
is an all-triangular $D(u^2)$-quadruple if and only if
there exists an integer $x$ satisfying
\begin{equation}\label{eq:construction-pell}
x^2-(64k^2-4)\frac{u^2}{k^2}=5-64k^2.
\end{equation}
\end{proposition}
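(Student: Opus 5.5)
The plan is to write $p=\frac{u-4k^2-k}{2k}$, $q=\frac{u+4k^2-k}{2k}$, $m=\frac{u^2}{k^2}-1$ and $n_0=16k^2-1$, and to realize the two outer elements $T_{n_0}$ and $T_m$ as the two regular extensions $d_\mp$ of the $D(u^2)$-triple $\{T_p,T_q,8u^2\}$. Then every condition is automatic except the single product $T_{n_0}T_m+u^2$, which will turn out to be exactly \eqref{eq:construction-pell}.

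\textbf{Integrality, positivity, distinctness.} The congruence $u\equiv k\pmod{2k}$ gives $k\mid u$, so $m$ is an integer. It also makes $\frac{u-k}{2k}$ an integer, and $p,q$ differ from it by $\mp 2k$, so $p,q$ are integers with $q=p+4k$. The bound $u\ge k(32k^2+4k+1)$ is equivalent to $p\ge 16k^2$, so $n_0<p<q$. Since $u\ge 32k^3$, we also get $m=u^2/k^2-1>q$, so all four indices are distinct and positive.

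\textbf{The pair $\{T_p,T_q\}$.} Since $q-p=4k$ and $p+q+1=u/k$,
\[
T_q-T_p=\tfrac{(q-p)(q+p+1)}{2}=2u .
\]
Hence
\[
T_pT_q+u^2=\Bigl(\tfrac{T_p+T_q}{2}\Bigr)^2,
\]
which is an integer square because $T_p\equiv T_q\pmod 2$. By \eqref{glavnajednakost}, $8u^2$ pairs with every triangular number, so $\{T_p,T_q,8u^2\}$ is a $D(u^2)$-triple.

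\textbf{The regular extensions.} I will compute $d_\pm$ from \eqref{eq:regularity2} with $a=T_p$, $b=T_q$, $c=8u^2$, $N=u^2$. The square roots are explicit:
\[
\sqrt{ab+N}=\tfrac{T_p+T_q}{2},\qquad \sqrt{ac+N}=u(2p+1),\qquad \sqrt{bc+N}=u(2q+1).
\]
So $d_\pm$ become polynomials in $p$ (or in $u,k$), and I expect to check that $d_+=T_m$ and $d_-=T_{n_0}$. As a consistency check on the second identity, I will use $d_+d_-=(c-a-b)^2-4(ab+N)$ rather than expanding $d_-$ directly.

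This algebraic verification is the main obstacle. I will substitute $p=\frac{u-k}{2k}-2k$ and $q=\frac{u-k}{2k}+2k$ and expand in $u$, which should collapse to $\frac{u^2}{2k^2}\bigl(\frac{u^2}{k^2}-1\bigr)=T_m$ and to $8k^2(16k^2-1)=T_{n_0}$. Since $d_\pm$ are regular extensions, $\{T_p,T_q,8u^2,d_\pm\}$ are $D(u^2)$-quadruples. In particular, the products of $T_m$ with $T_p$ and $T_q$, and of $T_{n_0}$ with $T_p$ and $T_q$, are all squares once $u^2$ is added.

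\textbf{Reduction to the Pell condition.} The only remaining pair is $\{T_{n_0},T_m\}$. Using $T_{n_0}=8k^2(16k^2-1)$ and $T_m=\frac{u^2}{2k^2}\bigl(\frac{u^2}{k^2}-1\bigr)$,
\[
T_{n_0}T_m+u^2=u^2\Bigl((64k^2-4)\tfrac{u^2}{k^2}-64k^2+5\Bigr).
\]
The bracket is an integer, so the left side is a perfect square iff the bracket is the square of an integer $x$, which is precisely \eqref{eq:construction-pell}. This gives both directions of the equivalence.
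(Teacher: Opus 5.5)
Your proposal is correct and follows essentially the same route as the paper: you realize $T_{16k^2-1}$ and $T_{u^2/k^2-1}$ as the regular extensions $d_-$ and $d_+$ of the triple $\{T_p,T_q,8u^2\}$ (using $T_q-T_p=2u$), so that only the pair $\{d_-,d_+\}$ remains, and that pair reduces exactly to \eqref{eq:construction-pell}. The algebra you defer goes through as in the paper: with $v=u/k$ and $S=T_p+T_q=(v^2+16k^2-1)/4$, you have $16T_pT_q=4S^2-16u^2$, so $d_\pm=S\bigl(v^2+16k^2\pm(v^2-16k^2)\bigr)-8u^2$, which gives $d_+=T_{v^2-1}$ and $d_-=T_{16k^2-1}$.
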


\begin{proof}
The congruence in \eqref{eq:construction-conditions}
implies that $u/k$ is an odd integer. Consequently,
all four indices in \eqref{eq:construction-quadruple}
are integers.

Moreover, it is easy to see that the lower bound in
\eqref{eq:construction-conditions} gives
\[
0<16k^2-1
<
\frac{u-4k^2-k}{2k}
<
\frac{u+4k^2-k}{2k}
<
\frac{u^2}{k^2}-1.
\]
Thus the four triangular numbers are positive and
pairwise distinct.

Set
\[
a=T_{\frac{u-4k^2-k}{2k}},
\qquad
b=T_{\frac{u+4k^2-k}{2k}},
\qquad
c=8u^2.
\]
Since
\[
a=\frac{(u/k-4k)^2-1}{8},
\qquad
b=\frac{(u/k+4k)^2-1}{8},
\]
we have $b-a=2u$. Hence
\[
ab+u^2
=
ab+\frac{(b-a)^2}{4}
=
\left(\frac{a+b}{2}\right)^2.
\]
Identity~\eqref{glavnajednakost} also gives
\[
ac+u^2
=
\left[u\left(\frac{u}{k}-4k\right)\right]^2,
\qquad
bc+u^2
=
\left[u\left(\frac{u}{k}+4k\right)\right]^2.
\]
Therefore $\{a,b,c\}$ is a $D(u^2)$-triple. 

Consider the two regular extension values
\eqref{eq:regularity2}. Substituting our parameters, we obtain
\[
d_{\pm}
=
a+b+8u^2+16ab
\pm
(a+b)\left(\frac{u^2}{k^2}-16k^2\right).
\]
Using
\[
a+b=\frac{u^2/k^2+16k^2-1}{4}
\]
and
\[
16ab
=
\frac{(u^2/k^2+16k^2-1)^2-64u^2}{4},
\]
we find
\[
d_{\pm}
=
\frac{u^2/k^2+16k^2-1}{4}
\left[
\frac{u^2}{k^2}+16k^2
\pm\left(\frac{u^2}{k^2}-16k^2\right)
\right]
-8u^2.
\]
Consequently,
\begin{align*}
d_-
&=
8k^2(16k^2-1)
=
T_{16k^2-1},\\
d_+
&=
\frac{u^2(u^2-k^2)}{2k^4}
=
T_{\frac{u^2}{k^2}-1}.
\end{align*}

Thus five of the six required square conditions for
$\{d_-,a,b,d_+\}$ hold. The remaining
condition concerns the pair $d_-,d_+$. We have
\begin{align*}
d_-d_++u^2
&=
u^2\left[
4(16k^2-1)\left(\frac{u^2}{k^2}-1\right)+1
\right]\\
&=
u^2\left[
(64k^2-4)\frac{u^2}{k^2}+5-64k^2
\right].
\end{align*}
Since $u$ is a positive integer and the expression in
brackets is an integer, this is an integer square
if and only if the expression in brackets is an
integer square.
The remaining condition is therefore precisely
\eqref{eq:construction-pell}.
\end{proof}

We now prove Theorem~\ref{thm:triangular-quadruples} by constructing an
infinite sequence of solutions of \eqref{eq:construction-pell} and verifying
\eqref{eq:construction-conditions} for every $n\ge2$.
\begin{proof}[Proof of Theorem~\ref{thm:triangular-quadruples}]
Fix a positive integer $k$. First notice that the integer $64k^2-4$ is not a square, since
\[
(8k-1)^2<64k^2-4<(8k)^2.
\]
The associated Pell equation $x^2-(64k^2-4)\frac{u^2}{k^2}=1$ has fundamental solution $(32k^2-1,4k)$ (since $\sqrt{64k^2-4}=[8k-1,\overline{1,4k-2,1,16k-2}]$). 
The standard bounds for fundamental solutions $(x^*,(\frac{u}{k})^*)$ of (\ref{eq:construction-pell}) \cite[Theorem~10.21]{knjiga}  are then $|x^*|\leq\sqrt{(16k^2-1)(64k^2-5)}<32k^2-2$ and $0\leq y^* \leq 4k\sqrt{\frac{64k^2-5}{64k^2-4}}<4k$.

Thus, one fundamental solution is $(-1,1)$, from which we get the sequence of solutions $(x_n,\frac{u_n}{k})_{n\geq0}$ by
\begingroup
\small
\[x_n+\frac{u_n}{k}\sqrt{64k^2-4}=(-1+\sqrt{64k^2-4})(32k^2-1+4k\sqrt{64k^2-4})^n,\quad n\geq0.\]
\endgroup
    From this, if we put $\epsilon=32k^2-1+4k\sqrt{64k^2-4}$ and since $\epsilon + \epsilon^{-1}=64k^2-2$, values $(u_n)_{n\geq0}$ can be expressed by recurrence relation:
\begin{equation}
\begin{aligned}
u_0&=k,\qquad u_1=k(32k^2-4k-1),\\
u_{n+2}&=(64k^2-2)u_{n+1}-u_n\qquad(n\ge0).
\end{aligned}
\end{equation}
It remains to verify the conditions \eqref{eq:construction-conditions}.
First, the integers
\[
\frac{u_0}{k}=1,
\qquad
\frac{u_1}{k}=32k^2-4k-1
\]
are odd. Since $64k^2-2$ is even, by mathematical induction it is easy to verify that $u_n/k$ is an odd integer for every $n\ge0$. Consequently,
\begin{equation}\label{eq:family-congruence}
u_n\equiv k\pmod{2k}
\qquad(n\ge0).
\end{equation}

Second, $u_1>u_0>0$. If we assume that $u_{n+1}>u_n>0$ holds, then
\[
u_{n+2}
=
(64k^2-2)u_{n+1}-u_n
>
(64k^2-3)u_{n+1}
>
u_{n+1}.
\]
Thus $(u_n)_{n\ge0}$ is strictly increasing.

The recurrence gives
\[
\frac{u_2}{k}
=
2048k^4-256k^3-128k^2+8k+1.
\]
It follows that
\begin{align*}
\frac{u_2}{k}-(32k^2+4k+1)
&=
4k(512k^3-64k^2-40k+1)\\
&\ge
4k(408k^3+1)>0,
\end{align*}
where we used $k\ge1$. By monotonicity,
\begin{equation}\label{eq:family-lower-bound}
u_n\ge k(32k^2+4k+1)
\qquad(n\ge2).
\end{equation}

Equations \eqref{eq:family-congruence} and \eqref{eq:family-lower-bound} show that every $u_n$ with $n\ge2$ satisfies \eqref{eq:construction-conditions}. 
Thus Proposition~\ref{prop:triangular-construction} applies with $u=u_n$
and gives the required all-triangular $D(u_n^2)$-quadruple for every $n\ge2$.
\end{proof}

\begin{remark}
The restriction $n\geq 2$ excludes the two degenerate initial cases.
Indeed, $u_0=k$ gives $T_{u_0^2/k^2-1}=T_0=0$, whereas
\[
  \frac{u_1+4k^2-k}{2k}=16k^2-1,
\]
so for $n=1$ two of the displayed triangular numbers coincide.
\end{remark}

\begin{example}
Fix $k=1$. Then $u_0=1$, $u_1=27$, and
$u_{n+2}=62u_{n+1}-u_n$. For $n=2,3,4$, Theorem~\ref{thm:triangular-quadruples} gives
the following $D(u_n^2)$-quadruples:
\[
\renewcommand{\arraystretch}{1.3}
\begin{array}{c|r|l}
 n & u_n & \text{Quadruple} \\ \hline
 2 & 1673 & \{\mathbf{T_{15}},T_{834},T_{838},T_{2798928}\} \\
 3 & 103699 & \{\mathbf{T_{15}},T_{51847},T_{51851},T_{10753482600}\} \\
 4 & 6427665 & \{\mathbf{T_{15}},T_{3213830},T_{3213834},T_{41314877352224}\}
\end{array}
\]
\end{example}

\begin{proposition}
Every $D(u^2)$-quadruple obtained in Proposition~\ref{prop:triangular-construction} is irregular.
Consequently, all quadruples in Theorem~\ref{thm:triangular-quadruples} are irregular.
\end{proposition}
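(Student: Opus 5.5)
The plan is to use the characterization recalled in the introduction: a $D(N)$-quadruple with elements $e<a<b<f$ is regular if and only if $f=d_+$, where $d_+$ is computed from the triple $\{e,a,b\}$ via \eqref{eq:regularity2}. In the notation of the proof of Proposition~\ref{prop:triangular-construction}, put $m=u/k$, which is an odd integer with $m\ge 32k^2+4k+1$. The elements are then
\[
e=T_{16k^2-1}=8k^2(16k^2-1),\quad a=\tfrac{(m-4k)^2-1}{8},\quad b=\tfrac{(m+4k)^2-1}{8},\quad f=T_{m^2-1}=\tfrac{m^2(m^2-1)}{2}.
\]
The proof of Proposition~\ref{prop:triangular-construction} already gives the ordering $e<a<b<f$. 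So it suffices to show that $f\neq d_+(e,a,b)$. I will prove the strict inequality $f>d_+(e,a,b)$ by a size comparison.

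\textbf{Step 1: upper bound for $d_+$.} From \eqref{eq:regularity2},
\[
d_+(e,a,b)\le e+a+b+\frac{4}{u^2}\sqrt{(ea+u^2)(eb+u^2)(ab+u^2)}.
\]
We know $ab+u^2=((a+b)/2)^2$. We also have $a<b<m^2/4$ (as $m\ge 4k$) and $e<128k^4$. Moreover $u^2=k^2m^2$ is small compared with $ea$ and $eb$: indeed $ea\approx 16k^4m^2\ge 16k^2u^2$ up to lower-order terms, so $ea+u^2\le 2ea$, say, with room to spare. This gives
\[
d_+\le e+a+b+\frac{4}{k^2m^2}\cdot 2e\sqrt{ab}\cdot\frac{a+b}{2},
\]
which is of order $Ck^2m^2$ for an absolute constant $C$. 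Heuristically $d_+\approx 4eab/u^2\approx 8k^2m^2$. I will aim for an explicit bound such as $d_+<64k^2m^2$.

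\textbf{Step 2: compare with $f$.} We have $f=m^2(m^2-1)/2\ge m^4/4$. The hypothesis $m\ge 32k^2+4k+1$ gives $m^2>1024k^4$, so
\[
f\ge m^2\cdot 256k^4>64k^2m^2>d_+.
\]
Hence $f\ne d_+$, so the quadruple is irregular. The second assertion then follows, because Theorem~\ref{thm:triangular-quadruples} is obtained by applying Proposition~\ref{prop:triangular-construction} with $u=u_n$.

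\textbf{Expected obstacle.} The only delicate step is Step 1: keeping the constants honest so that every lower-order term (the $-1$'s, the $u^2$ added to $ea$ and $eb$, and $e+a+b$ itself) is absorbed without losing the large margin. That margin comes from $f/d_+\approx m^2/(16k^2)\ge 64k^2$, which is huge, so crude estimates such as $a+b<m^2/2$ and $e<128k^4$ should be enough. I would first try bounding each factor under the square root separately by its leading monomial times $2$.
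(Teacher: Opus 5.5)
Your argument is correct, but it takes a different route from the paper. The paper's proof is calculation-free. Since $d_-=T_{16k^2-1}$ is a regular extension of $\{a,b,8u^2\}$, and the relation \eqref{eq:regular-quadruple} is symmetric in all four entries, the set $\{d_-,a,b,8u^2\}$ is regular. So $8u^2$ is a regular extension of $\{d_-,a,b\}$, and since it exceeds $b$ it is the larger one: $d_+(d_-,a,b)=8u^2$. As $T_{u^2/k^2-1}=d_+(a,b,8u^2)>8u^2$, the quadruple is irregular.

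Your Step~1 is estimating exactly this quantity. In your notation $d_+(e,a,b)=8u^2=8k^2m^2$, so your heuristic ``$d_+\approx 8k^2m^2$'' is in fact an identity. With it, the whole comparison collapses to $T_{m^2-1}>8k^2m^2$, i.e.\ $m^2-1>16k^2$.

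What your route buys is self-containment. It uses only \eqref{eq:regularity2} and the ordering characterization of regular quadruples, not the symmetry of the regularity relation. The cost is explicit bookkeeping and reliance on the strong lower bound on $m$, far more than is actually needed.

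Two small slips in that bookkeeping, neither affecting correctness:
\begin{enumerate}
\item The justification ``$m\ge4k$'' does not give $b<m^2/4$: at $m=4k$ one has $b=(64k^2-1)/8>4k^2=m^2/4$. You need something like $m\ge10k$, which $m\ge32k^2+4k+1$ supplies.
\item With your crude bounds $e<128k^4$, $\sqrt{ab}<m^2/4$, $a+b<m^2/2$, the square-root term alone is already bounded only by $64k^2m^2$. Adding $e+a+b<\tfrac58m^2$ gives $d_+<65k^2m^2$ rather than $64k^2m^2$. This is harmless, since $f>256k^4m^2$.
\end{enumerate}
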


\begin{proof}
Using the notation of Proposition~\ref{prop:triangular-construction}, we have $d_-<a<b<c<d_+$. By construction, $\{d_-,a,b,c\}$ is
regular, so $c$ is the unique regular extension of $\{d_-,a,b\}$
larger than $b$. Since $d_+>c$, the quadruple
$\{d_-,a,b,d_+\}$ is irregular.
\end{proof}

By \eqref{glavnajednakost}, each of these quadruples can be extended to a
$D(u^2)$-quintuple by adjoining the nontriangular element $8u^2$. For each
fixed positive integer $k$, this gives infinitely many such quintuples with
smallest element $T_{16k^2-1}$ and distinct positive integer parameters $u$.

\begin{cor}
For every positive integer $k$, there exist infinitely many distinct
positive integers $u$ for which a $D(u^2)$-quintuple has smallest
element $T_{16k^2-1}$ and exactly four triangular elements.
\end{cor}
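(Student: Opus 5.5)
The corollary follows directly from Theorem~\ref{thm:triangular-quadruples} together with identity~\eqref{glavnajednakost}.

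\textbf{Construction.} Fix $k\ge1$ and take the sequence $(u_n)_{n\ge2}$ of that theorem. It is strictly increasing, as shown in its proof, so these $u_n$ are pairwise distinct positive integers. For each $n\ge2$, Theorem~\ref{thm:triangular-quadruples} provides the $D(u_n^2)$-quadruple
\[
Q_n=\left\{T_{16k^2-1},T_{\frac{u_n-4k^2-k}{2k}},T_{\frac{u_n+4k^2-k}{2k}},T_{\frac{u_n^2}{k^2}-1}\right\}
\]
of distinct positive triangular numbers. I will adjoin $8u_n^2$ to $Q_n$. By \eqref{glavnajednakost}, $8u_n^2T_m+u_n^2=u_n^2(2m+1)^2$ for every $m$, so all four new products are squares. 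Hence $Q_n\cup\{8u_n^2\}$ has the $D(u_n^2)$ property once we know it has five distinct elements.

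\textbf{Remaining checks.} Three facts must be verified.

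First, $8u_n^2$ is not triangular. Since $8T_m+1=(2m+1)^2$, the number $8u^2$ would be triangular only if $64u^2+1$ were a square. That is impossible, because $(8u)^2<64u^2+1<(8u+1)^2$. So $8u_n^2$ differs from every element of $Q_n$, the set has exactly five elements, and exactly four of them are triangular.

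Second, $T_{16k^2-1}$ is the smallest element. The proof of Proposition~\ref{prop:triangular-construction} already orders the four indices, so $T_{16k^2-1}$ is the least triangular element. It remains to compare it with $8u_n^2$. We have $T_{16k^2-1}=8k^2(16k^2-1)<128k^4$. On the other hand, $u_n\ge k(32k^2+4k+1)>32k^3$ for $n\ge2$, so $8u_n^2>8192k^6\ge128k^4$.

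Third, infinitely many $u$ arise. Distinct $n$ give distinct $u_n$, so we obtain infinitely many distinct parameters $u$, as required.

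The only step that is not pure bookkeeping is the observation that $8u^2$ is never triangular. It is settled by the squeeze between consecutive squares above, so I expect no real obstacle.
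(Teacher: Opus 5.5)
Your proposal is correct and follows the paper's own argument. The paper adjoins the non-triangular element $8u_n^2$ to the quadruples of Theorem~\ref{thm:triangular-quadruples} via \eqref{glavnajednakost}, and gets distinct parameters from the strict monotonicity of $(u_n)$. Your explicit checks that $8u^2$ is never triangular and that $8u_n^2>T_{16k^2-1}$ supply details the paper leaves implicit.
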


\section*{Acknowledgements}
The author is supported by the Croatian Science Foundation grant
no.~IP-2022-10-5008 and by the University of Split, grant
no.~IP-UNIST-44, funded by the European Union - NextGenerationEU.

\bigskip
\noindent University of Split, Faculty of Science,
Ru\dj{}era Bo\v{s}kovi\'{c}a 33, 21000 Split, Croatia

\noindent Email: \href{mailto:pradic@pmfst.hr}{pradic@pmfst.hr}


\begin{thebibliography}{99}

\bibitem{AHS1980}
J.~Arkin, V.~E.~Hoggatt, Jr., and E.~G.~Straus,
\textit{On Euler's solution to a problem of Diophantus---II},
Fibonacci Quart. \textbf{18} (1980), 170--176.

\bibitem{BagciZhouZheng}
S.~Bagchi and C.~Zhou-Zheng,
\textit{Diophantine $m$-tuples of triangular numbers}, arXiv:2608.27697 (2026).


\bibitem{BliznacTriangular}
M.~Bliznac Trebje\v{s}anin,
\textit{On Diophantine triples containing a triangular number},
Math. Slovaca (2026).

\bibitem{BliznacTriangular2}
M.~Bliznac Trebje\v{s}anin,
\textit{On Diophantine pairs and triples of triangular numbers}, arXiv:2603.29565
(2026).


\bibitem{BliznacFilipin2019}
M.~Bliznac Trebje\v{s}anin and A.~Filipin,
\textit{Nonexistence of $D(4)$-quintuples},
J. Number Theory \textbf{194} (2019), 170--217.

\bibitem{Dujella}
Y.~Bugeaud, A.~Dujella, and M.~Mignotte,
\textit{On the family of Diophantine triples
$\{k-1,k+1,16k^3-4k\}$},
Glasgow Math. J. \textbf{49} (2007), 333--344.

\bibitem{Deshpande}
M.~N.~Deshpande,
\textit{One property of triangular numbers},
Portugaliae Math. \textbf{55} (1998), 381--383.

\bibitem{knjiga}
A.~Dujella, \textit{Number Theory},
\v{S}kolska knjiga, Zagreb, 2021.

\bibitem{Fujita}
Y. Fujita, \textit{The extensibility of Diophantine pairs {k-1, k+1}}, J. Number Theory \textbf{128} (2008), 322-353

\bibitem{Gibbs2010}
P.~Gibbs, \textit{Adjugates of Diophantine quadruples},
Integers \textbf{10} (2010).

\bibitem{HeTogbeZiegler2019}
B.~He, A.~Togb\'{e}, and V.~Ziegler,
\textit{There is no Diophantine quintuple},
Trans. Amer. Math. Soc. \textbf{371} (2019), 6665--6709.

\end{thebibliography}
\end{document}